\newtheorem{definition}{Definition}
\newtheorem{theorem}{Theorem}
\newtheorem{lemma}[theorem]{Lemma}
\newtheorem{remark}[theorem]{Remark}
\newtheorem{propertie}[theorem]{Properties}
\def\R{\mathbb{R}}
\def\N{\mathbb{N}}
\def\E{\mathbb{E}}
\def\to{\rightarrow}
\def\lto{\longrightarrow}
\def\bv{\big\vert}
\def\1{\mathbbm{1}}
\def\e{\varepsilon}
\author{A. Lechiheb and E. Haouala}
\title{Fluctuations in 1D stochastic homogenization of Pseudo-elliptic equations with Long-range dependent potentials}
\begin{document}

\maketitle
\begin{abstract}
This paper deals with the homogenization problem of one-dimensional pseudo-elliptic equations with a rapidly varying random potential.
The main purpose is to characterize the homogenization error (random fluctuations), i.e., the difference between the random solution and the homogenized solution, which strongly depends on the autocovariance property of the underlying random potential. It is well known that when the random potential has short-range dependence, the rescaled homogenization error converges in distribution to a stochastic integral with respect to standard Brownian motion. Here, we are interested  in potentials with long-range dependence and we prove convergence to stochastic integrals with respect to Hermite process.
\end{abstract}
\textbf{Keywords}: Pseudo-elliptic equations, fluctuation theory, random homogenization, long-range dependence, Hermite processes.
\section{Introduction}
We consider the following one-dimensional pseudo-elliptic equation:
\begin{equation}\label{EP1}
 \begin{cases}
 \displaystyle{P(x,D)u_\varepsilon(x,\omega) \,+\,  \left(q_0(x) +q(\frac{x}{\varepsilon},\omega)\right) u_\varepsilon(x,\omega) } = f(x), \quad x\in (0,1),\\
 u_\varepsilon(0, \omega) =  u_\varepsilon(1, \omega) = 0,
 \end{cases}
 \end{equation}
where
\begin{itemize}
  \item $P(x,D)$ is a deterministic self-adjoint, elliptic and pseudo-differential operator,
  \item $x\mapsto q_0(x)$ is a smooth function bounded by some positive constant $\gamma$,
  \item $q(\frac{x}{\varepsilon},\omega)$ is the rescaling of a bounded, stationary and mean zero process $q(x,\omega)$, defined on some abstract probability space $\big( \Omega, \mathscr{F}, \mathbb{P}\big)$,
  \item $f(x)\in L^2\big((0,1), dx \big)$ is the source term.
\end{itemize}
It is well known that, under mild conditions such as stationarity and ergodicity of the random process $q(x,\omega)$, the homogenization/averaging of such problem, i.e., where the randomness appears as a potential, is obtained simply by averaging $\tilde{q}_\varepsilon(x,\frac{x}{\varepsilon},\omega)= q_0(x)+q(\frac{x}{\varepsilon},\omega)$ (see, e.g., \cite{B08,Figari,JKO94}).
Then $u_\varepsilon$ converges, for instance in $L^2\big((0,1)\times \Omega\big)$, to $u_0$, which is the unique solution to the unperturbed equation
\begin{align}\label{homog}
 \begin{cases}
 \displaystyle{(P(x,D)+q_0)u_0(x,\omega) = f(x)}, \quad x\in (0,1), \\
 u_0(0, \omega) =  u_0(1, \omega) = 0.
 \end{cases}
 \end{align}
We define the operator
\begin{equation} \label{operator}\mathcal{G}\,:=\, \big(P(x,D) +q_0\big)^{-1},\end{equation}
which is well defined almost everywhere in $\Omega$. Assume that $\mathcal{G}$, as transformation on $L^2\big([0,1]\big)$, is bounded for all realizations, and the appear bound of the operator is independent of the potential.
Finally, we assume the existence of a Green function $G(x,y)$ associated to $\mathcal{G}$, namely,
\begin{equation}\label{} u(x) \,=\, \mathcal{G}f(x) \,:=\,\int_0^1 G(x,y) f(y) \,dy, \end{equation} which has, for $\beta\in(0,1)$, a singularity of the type
\begin{equation}\label{boundG} \vert G(x,y) \vert \leq \frac{C}{\vert x-y\vert^{1-\beta}}, \end{equation} for some universal constant $C$.

The main objective of this paper is to analyse the random fluctuation (homogenization error), that is, the difference between the solution $u_\varepsilon$ and the homogenized solution $u_0$. In other words, we are going to compute the rate of convergence and characterize the limiting distribution of the rescaled fluctuations. The same question has been addressed by several authors. They have shown that the random fluctuations strongly depends on the autocovariance property of the underlying random potential. Firstly, it was shown in \cite{B08} that if $q(x,\omega)$ satisfies certain mixing assumption and has an integrable autocovariance function, in which case we say that $q(x, \omega)$ has short-range correlation, then a CLT holds:
\begin{equation}\label{}
\frac{u_\varepsilon(x)-u_0(x)}{\sqrt{\varepsilon}} \Rightarrow \int_0^1 G(x,y) u_0(y)d B(y)
\end{equation}
in $\mathcal{C}([0,1])$, where $G(x,y)$ is the deterministic Green function satisfying condition \eqref{boundG} and $B(y)$ is a standard Brownian motion. Then the result has been extended to a large class of random potentials with long-range autocovariance function that decays like $|x|^{-\alpha}$ for some $\alpha\in(0,1)$ and the homogenization error amplitude is of order $\varepsilon^{\alpha/2}$. In this case, we have
\begin{equation}\label{}
\frac{u_\varepsilon(x)-u_0(x)}{\varepsilon^{\alpha/2}} \Rightarrow \int_0^1 G(x,y) u_0(y)d B^H(y),
\end{equation}
where $B^H(y)$ is a fractional Brownian motion with Hurst index $H=1-\frac{\alpha}{2}$ (see \cite{BG12} for more details).

In this paper, we focus on \eqref{EP1} and follow the framework in \cite{BG12} to obtain an extension to limits of the random fluctuations in the presence of long-range dependence. Based on the works of \emph{Murad Taqqu} especially on \emph{Convergence of integrated processes of arbitray Hermite rank}, we construct a large class of random potentials with long-range dependence which contains, in particular, the case already dealt in \cite{BG12}. Additionally, the arguments rely strongly upon an application of Taqqu's theorem (see Theorem \ref{Taqqu79} below).
Our main result in Theorems \ref{homogenization} and \ref{Non-central-limit-theorem} says that, under a proper assumptions on the random potential, the homogenization error amplitude is of order $\varepsilon^{\theta}$, and then
\begin{equation}\label{}
\frac{u_\varepsilon(x)-u_0(x)}{\varepsilon ^\theta} \Rightarrow \int_0^1 G(x,y) u_0(y)d Z(y),
\end{equation}
where $\theta$ is some positive constant described bellow and $Z$ is the Hermite process.

The rest of the paper is organized as follows. Section \ref{section2} presents the setup and our main results. In section \ref{section3}, we recall the definition of the Hermite process and we construct the Wiener integrals with respect to it. Finally, section \ref{section4} contains the proofs of our main results.

\section{Setup and main results}\label{section2}
In this section, we will recall the concept of long-range dependence, state our main assumptions and then we will give our main results.
\subsection{Long-range dependence}
\begin{definition}
A function $L$ is slowly varying at infinity if it is positive on $[c, \infty)$ with $c\geq0$ and, for any $a>0$,
\begin{equation} \lim _{u\to\infty} \frac{L(au)}{L(u)}\,=\,1. \end{equation}
\end{definition}
For example, the two functions $L(u)\,=\, \text{Cst}>0$ and $L(u)\,=\,\log(u)$, $u>0$, are slowly varying at infinity.\\
Let $L: (0,+\infty) \to (0, +\infty)$ be a slowly varying function at $+\infty$ and $\alpha > 0$. It is known (see \cite[Proposition 1.3.6(v)]{BGT87}) that
\begin{align}\label{propp}
x^\alpha L(x) \to  + \infty \,  \quad\text{and} \quad x^{-\alpha} L(x) \to 0 \,\,,
\end{align}
as $x\to +\infty$.

The following result is known as Potter's Theorem (see \cite[Theorem 1.5.6(ii)]{BGT87}).
\begin{theorem}\label{Harry_Potter} Let $L: (0,+\infty) \to (0, +\infty)$ be a slowly varying function at $+\infty$ which is bounded away from $0$ and $+\infty$ on every compact subset of $(0,+\infty)$.  Then, for any $\delta > 0$, there exists some constant $C = C(\delta)$ such that
 $$\frac{L(y)}{L(x)} \leq C \max\Big\{ \, (\frac{x}{y})^\delta \,, \, (\frac{y}{x})^{\delta} \Big\}$$
for any $x,\, y\in(0,+\infty)$.
\end{theorem}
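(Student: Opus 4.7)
The most natural route is through Karamata's representation theorem, which asserts that every slowly varying function $L$ on $[a, +\infty)$ can be written as
$$L(x) \,=\, c(x) \exp\left(\int_a^x \frac{\varepsilon(t)}{t}\,dt\right),$$
with $c(x) \to c \in (0, +\infty)$ and $\varepsilon(t) \to 0$ as $t \to +\infty$. My first step would be to establish the announced estimate on the large-value region: given $\delta > 0$, I pick a threshold $X$ beyond which $|\varepsilon(t)| \leq \delta$ and the ratio $c(x)/c(y)$ is trapped between two positive constants. Assuming without loss of generality $y \leq x$, the representation then yields
$$\log \frac{L(x)}{L(y)} \,=\, \log \frac{c(x)}{c(y)} + \int_y^x \frac{\varepsilon(t)}{t}\,dt \,\leq\, K + \delta \log \frac{x}{y},$$
so that $L(x)/L(y) \leq K' (x/y)^\delta$; swapping the roles of $x$ and $y$ handles the reverse inequality. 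This already yields the desired bound on $[X, +\infty)^2$.

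The second step is to propagate the bound to the whole of $(0, +\infty)^2$. Here the hypothesis that $L$ is bounded away from $0$ and $+\infty$ on compact subsets plays its role, furnishing a constant $M = M(X)$ with $M^{-1} \leq L(u) \leq M$ for $u \in (0, 2X]$. One splits the quadrant into three regions, namely $\{x, y \leq X\}$, $\{\min(x,y) \leq X \leq \max(x,y)\}$ and the already handled $\{x, y \geq X\}$, and combines the Karamata bound with the crude estimate $L(x)/L(y) \leq M^2$ on bounded regions. Since $\max\{(x/y)^\delta, (y/x)^\delta\} \geq 1$, the auxiliary constants arising in each case are harmlessly absorbed into an enlarged $C = C(\delta)$.

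I expect the main difficulty to be essentially organizational: keeping track of constants when gluing the asymptotic Karamata regime with the compact-set estimate, and verifying that the exponent $\delta$ appearing in the final inequality is exactly the one used in the Karamata step (if not, one simply starts the argument with $\delta/2$ and accepts a larger prefactor). The symmetry between the two cases $x \leq y$ and $y \leq x$ explains the maximum on the right-hand side; apart from that, the proof relies solely on the representation theorem combined with the stated local boundedness assumption.
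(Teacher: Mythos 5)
Your argument is correct and complete in outline: the paper itself offers no proof of this statement, simply citing it as Theorem~1.5.6(ii) of Bingham--Goldie--Teugels, and the route you take (Karamata's representation theorem on $[X,+\infty)$, then gluing via the local boundedness hypothesis, using that $\max\{(x/y)^\delta,(y/x)^\delta\}\geq 1$ to absorb constants) is precisely the standard proof given there. The only implicit point worth acknowledging is that the representation theorem requires $L$ to be measurable (or locally bounded), which is exactly what the stated hypothesis of being bounded away from $0$ and $+\infty$ on compacts is there to supply.
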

Having defined the notion of slowly varying functions, we turn to the definitions of long-range dependence.
\begin{definition}
A continuous-time stationary processes $\big\{X(t)\big\}_{t\in\mathbb{R}}$ is called long-range dependent (LRD) if one of the following non-equivalent conditions holds:
\begin{enumerate}[(i)]
  \item The autocovariance function of the time processes $\big\{X(t)\big\}_{t\in\mathbb{R}}$ satisfies
  \begin{equation}\label{au} \gamma_X(h) \,=\, \text{Cov}\,\big(X(h), X(0)\big) \,=\, \text{Cov}\,\big(X(h+x), X(x)\big)\,=\, L(h)h^{2d-1}\quad h,\,x\in\mathbb{R}, \end{equation} where $L$ is a slowly varying function at infinity and and $d\in(0,\frac{1}{2})$.
  \item The autocovariances of the time processes $\big\{X(t)\big\}_{t\in\mathbb{R}}$ are not absolutely integrable, that is,
\begin{equation}
\int_\mathbb{R}\vert \gamma_X(h)\vert \,dh =\infty.
\end{equation}
\end{enumerate}
\end{definition}
In many instances, for example, in \cite{BG12}, the slowly varying function in \eqref{au} is such that $L(u) \sim \text{Cst}>0$ at infinity, and then the condition (i) becomes
$$\gamma_X(h) \sim \text{Cst}\, h^{2d-1},\quad d\in(0,\frac{1}{2}).$$

\subsection{Description of the random potential}
Proceeding as in \cite{BG12}, we assume that the random potential $q$ has the following form:
\begin{equation}\label{q}
q(x,\omega) = \Phi\big( g(x, \omega) \big)  ,
\end{equation}
where the stochastic process $\big\{ g(x) \big\}_{x\in\R_+}$ and
the function $\Phi:\R\to\R$ are constructed as follows.\\
\textbf{Assumptions on $g$}.\\
\begin{itemize}
 \item Let $m\in\N^\ast$ be fixed. Let $H_0\in( 1 - \frac{1}{2m}, 1)$ and set $H = 1 + m(H_0 - 1)\in (\frac{1}{2}, 1)$,
 \item Fix  a slowly varying function $L: (0,+\infty)\to (0,+\infty)$  at $+\infty$. Assume, furthermore, that $L$ is bounded away from $0$ and $+\infty$ on every compact subset of $(0,+\infty)$. (See \cite{BGT87} for more details on slowly varying functions.)
 \item  Let $e:\R\to \R$ be a square-integrable function
   such that
   \begin{itemize}
     \item [(3a)] $ \int_\R e(u)^2 \, du = 1 $,
     \item[(3b)] $|e(u)|\leq C u^{H_0-3/2}L(u)$ for almost all $u>0$ and some absolute constant $C$,
     \item[(3c)] $e(u) \sim C_0 u^{H_0 -3/2} L(u)$, where $C_0 = \big( \int_0^\infty (u + u^2 )^{H_0-3/2} \, du \big)^{-1/2}$,
     \item[(3d)] there exists $0<\gamma < \min\big\{  H_0 - (1 - \frac{1}{2m} ) ,  1 - H_0    \big\}$  such that
      \begin{equation*} \int_{-\infty}^0 \vert e(u) e(xy+u)\vert \,du = o(x^{2H_0-2} L(x)^2) y^{2H_0 - 2 - 2\gamma} \end{equation*} as $x\to\infty$, uniformly in $y\in(0, t]$ for each given $t > 0$.
      \end{itemize}
 \item[4.] Finally, let  $W$ be a two-sided Brownian motion.
\end{itemize}
Bearing all these assumptions in mind, we can now set, for $x\in\R_+$,
 \begin{equation}\label{g}
 g(x) := \int_{-\infty}^\infty e(x- \xi) dW_\xi .
 \end{equation}
 \begin{remark}
 {\rm  One can verify (see, e.g., \cite[Section \S 2]{Taqqu79}) that the moving-average process $g$ is a stationary centered Gaussian process that exhibits a long-range dependence. More precisely, its autocovariance function $\gamma_g$ exhibits the following asymptotic behaviour:
 \begin{align}
  \gamma_g(x) := \text{Cov}(g(s),\,g(s+x))\, = \,\E\big[ g(s) g(s+x) \big] \sim  C( H_0)  x^{2H_0-2} L^2(x)\quad\mbox{ as $x\to+\infty$},
  \end{align}
where
 $$C( H_0) : =  \int_{-\infty}^\infty (u + u^2)^{H_0 - 3/2}\, du . $$
}
\end{remark}
\textbf{Example:} Fractional Gaussian Noise\\
Let $\big\{B^{H_0}(\xi)\big\}_{\xi\geq0}$ be a fractional Brownian motion with Hurst index $H_0\in(0,1]$. It is a centered continuous Gaussian process with covariance function
$$\mathbb{E}\big[B^{H_0}(\xi_1)B^{H_0}(\xi_2)\big] \;=\, \frac{1}{2} \big(|\xi_1|^{2H_0} +|\xi_2|^{2H_0}- |\xi_2-\xi_1|^{2H_0} \big).$$
The increments of $B^{H_0}(\xi)$ are stationary but not independent for $H_0 \neq \frac{1}{2}$, that is, for all $h>0$,
$$\big(B^{H_0}(\xi+h) - B^{H_0}(\xi)\big)_{\xi\geq0} \overset{\text{law}}{=}\,\, \big(B^{H_0}(\xi)\big)_{\xi\geq0}.$$
Moreover, $B^{H_0}(\xi)$ admits the following spectral representation:
\begin{eqnarray*}
  B^{H_0}(\xi) &=& A_{1,H_0} \Big( \int_{-\infty}^\infty dB(t) \int_0^\xi  (s- t)^{H_0 - 3/2}{\bf 1}_{(t_i < s)} \, ds\Big)\\
   &=& \frac{A_{1,H_0}}{H_0-\frac{1}{2}}\Big( \int_{-\infty}^\xi \left((\xi- t)^{H_0 - 1/2}-(-t)^{H_0 - 1/2}\right)\,dB(t)+\int_0^t (\xi- t)^{H_0 - 1/2}\, dB(t)\Big),
\end{eqnarray*}
where $B$ represents the standard real-valued Brownian motion and
$$A_{1,H_0}:= \left\{ \,\, \frac{ H_0 \big( 2H_0 - 1\big)}{\Big( {\displaystyle\int_0^\infty (u + u^2 )^{H_0 - 3/2} \, du}  \Big)}  \,\, \right\} ^{1/2}.$$

Now, we define the fractional Gaussian noise process as
\begin{equation}\label{FGN}
g_1(x) \,=\,  B^{H_0}(x)-  B^{H_0}(x-1), \quad x\in\mathbb{R}.
\end{equation}
This process  can be expressed as
\begin{equation}\label{}
  g_1(x) \,=\, \int_{-\infty}^\infty e(x- t) dB(t),
\end{equation}
with
$$ e(u) = \frac{1}{\sigma}\cdot\begin{cases} 0 & \text{if\,\,\, $u\leq0$},\\u^{H_0-1/2} &\text{if\,\,\, $0\leq u\leq 1$},\\ u^{H_0-1/2}-(u-1)^{H_0-1/2} &\text{if \,\,\,$u\geq 1$}, \end{cases}
$$
where
\begin{equation}\label{}
 \displaystyle \sigma = \frac{A_{1,H_0}}{H_0-\frac{1}{2}}.
\end{equation}
We can verify that the process $g_1$ satisfies all conditions cited above by choosing a slowly varying function $L$ as follows:
$$ L(u)= \begin{cases} u &\text{if\,\,\,  $0<u\leq 1$},\\ u^{3/2-H_0} \big(u^{H_0-1/2} -(u-1)^{H_0-1/2}\big)&\text{if\,\,  $u \geq1$}.\end{cases}$$

Now, we will concentrate our attention to the assumptions on the deterministic function $\Phi$.  Let $\nu$ denote the standard Gaussian measure on $\R$. Recall that every $f\in L^2(\R, \nu)$ admits the following series expansion:
 \begin{align}\label{chaotic}
  f = \sum_{q=0}^\infty  \frac{V_q}{q!} H_q,  \quad\text{with $V_q:=  \int_\mathbb{R} f(x)  H_q(x) \nu(dx)$,}
 \end{align}
 where $$H_q(x)= (-1)^q \exp(\frac{x^2}{2})\frac{d^q}{dx^q} \exp(-\frac{x^2}{2})$$
 denotes the $q$-th Hermite polynomial. Recall that the integer $m_f : = \inf\{q\geq 0\,:\,V_q\neq 0\}$ is called the {\it Hermite rank} of $f$ (with the convention $\inf\emptyset = +\infty$). For any integer $m\geq 1$, we define $\mathscr{G}_m$ to be the collection of all square-integrable functions (with respect to the standard Gaussian measure on $\R$) that have Hermite rank $m$.\\
\textbf{Assumptions on $\Phi$.} Throughout the rest of this paper, we assume that $\Phi\in \mathscr{G}_m$, satisfying \begin{equation}\label{AA1}\vert \Phi\vert\leq \gamma\leq q_0.\end{equation} The above bound of $\Phi$ ensures that $q_0+q_\varepsilon$ is non-negative, and then the operator
\begin{equation}\mathcal{G}_\varepsilon := \big(P(x,D) +q_0+ q_\varepsilon\big)^{-1}\end{equation}
is well defined which implies that equation \eqref{EP1} is well-posed almost surely.

\subsection{Asymptotic behaviour of the autocovariance function of $q$}
Now, set $\gamma_q(x) = \E\big[ q(0)q(x)\big]$, $x\in\R$, and recall that $m$ is the Hermite rank of $\Phi$. Then we have the following result.
\begin{lemma}
The autocovariance function of $q$ satisfies:
\begin{equation}\label{correlation}
\big\vert \gamma_q(x) \big\vert = \big(  o(1) +   \frac{ V_m^2}{m!} \big)  C(\sigma, H_0)^m   L(\vert x \vert)^{2m}  \vert x\vert^{-2m(1-H_0)} \, ,\text{as $\vert x \vert \to +\infty$}.
\end{equation}
Here $o(1)$ means the term converges to zero when $x\to \infty$.
\end{lemma}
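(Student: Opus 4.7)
My plan is to use the chaotic decomposition of $\Phi$ combined with the orthogonality of Hermite polynomials under a pair of correlated standard Gaussians. Since $\Phi \in \mathscr{G}_m \subset L^2(\R, \nu)$, the expansion \eqref{chaotic} gives $\Phi(y) = \sum_{k\geq m} (V_k/k!)\, H_k(y)$, so by composition
\begin{equation*}
 q(x,\omega) \,=\, \sum_{k\geq m}\frac{V_k}{k!}\,H_k\bigl(g(x,\omega)\bigr).
\end{equation*}
The normalization $\int_\R e(u)^2\,du=1$ (assumption (3a)) makes $g(x)$ a centered Gaussian with unit variance, so each $H_k(g(x))$ lies in the $k$-th Wiener chaos.

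Next, I would apply the standard identity for jointly standard Gaussians $(X,Y)$ with correlation $\rho$,
\begin{equation*}
 \E\bigl[H_p(X)\,H_k(Y)\bigr] \,=\, \delta_{p,k}\,k!\,\rho^{k},
\end{equation*}
to the pair $(g(0), g(x))$, which has correlation $\gamma_g(x)$. After justifying the term-by-term integration through the $L^2(\R,\nu)$ convergence of the chaotic expansion, this produces
\begin{equation*}
 \gamma_q(x) \,=\, \E\bigl[q(0)q(x)\bigr] \,=\, \sum_{k\geq m}\frac{V_k^2}{k!}\,\gamma_g(x)^{k}.
\end{equation*}
The lemma then reduces to an asymptotic analysis of this series as $|x|\to\infty$.

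To finish, I would isolate the $k=m$ term and invoke the equivalent $\gamma_g(x) \sim C(H_0)\,x^{2H_0-2}L(x)^2$ from the remark after \eqref{g}. This gives $\gamma_g(x)^m \sim C(H_0)^m\,|x|^{-2m(1-H_0)}L(|x|)^{2m}$, producing the leading factor $(V_m^2/m!)\,C(\sigma,H_0)^m$ displayed in \eqref{correlation} (absorbing any variance normalization of $g$ into $C(\sigma,H_0)$). For the tail $k\geq m+1$, the bound $|\gamma_g(x)|\leq 1$ holds once $|x|$ is large enough, hence
\begin{equation*}
 \Bigl|\sum_{k>m}\frac{V_k^2}{k!}\,\gamma_g(x)^{k}\Bigr| \,\leq\, |\gamma_g(x)|^{m+1}\sum_{k>m}\frac{V_k^2}{k!} \,\leq\, \|\Phi\|_{L^2(\R,\nu)}^2\,|\gamma_g(x)|^{m+1},
\end{equation*}
which is $o(\gamma_g(x)^m)$ since $\gamma_g(x)\to 0$. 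Combining the two contributions yields \eqref{correlation}.

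The main obstacle is not a genuine difficulty: it is the bookkeeping needed to exchange the expectation with the infinite chaotic sum and to package the higher-order chaos terms into the $o(1)$ prefactor. Both points follow from the Parseval identity $\sum_k V_k^2/k! = \|\Phi\|_{L^2(\R,\nu)}^2 < \infty$ together with the decay $\gamma_g(x)\to 0$ guaranteed by the remark, so no finer quantitative input from the kernel $e$ (in particular assumption (3d)) is required here.
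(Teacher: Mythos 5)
Your proposal is correct and follows essentially the same route as the paper: expand $\Phi$ in Hermite polynomials starting at rank $m$, use the orthogonality identity $\E[H_p(X)H_k(Y)]=\delta_{p,k}\,k!\,\rho^k$ to get $\gamma_q(x)=\sum_{k\ge m}\frac{V_k^2}{k!}\gamma_g(x)^k$, then extract the $k=m$ term via the asymptotics of $\gamma_g$ and absorb the tail (controlled by $\sum_k V_k^2/k!<\infty$ and $|\gamma_g|\le 1$, $\gamma_g(x)\to 0$) into the $o(1)$ factor. The only cosmetic difference is that you bound the tail explicitly by $|\gamma_g(x)|^{m+1}\|\Phi\|_{L^2(\R,\nu)}^2$ where the paper invokes dominated convergence; both are equivalent here.
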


\begin{proof}
Proceeding in similar lines as that of \cite[Lemma 2.1]{GB12}, since $V_0=...=V_{m-1}=0$, we have
\begin{equation} \Phi(g(x)) = \sum_{n=m}^\infty\frac{V_n}{n!}H_n(g(x)). \end{equation}
Thus
\begin{align*}
   \mathbb{E}\big[  \Phi(g(0))\Phi(g(x)) \big]  & =  \sum_{n_1,n_2=m}^\infty\frac{V_{n_1}}{n_1!}\frac{V_{n_2}}{n_2!}\mathbb{E}\big[ H_{n_1}(g(0))H_{n_2}(g(x))\big] \\
   & = \sum_{n=m}^\infty\frac{V_{n}^2}{(n!)^2}n!\gamma_g(x)^n  \\
   &= \frac{V_m^2}{m!} \gamma_g(x)^m +  \gamma_g(x)^m \cdot \sum_{n > m} \frac{V_n^2}{n!}\gamma_g(x)^{n-m} \, .
\end{align*}
It is clear that $$\sum_{n=0}^\infty\frac{V_n^2}{n!}<\infty  \qquad \text{and } \qquad  \vert \gamma_g(x) \vert \leq 1,$$
so by dominated convergence and the asymptotic behaviour of $\gamma_g$, we have
\begin{align*}
\big\vert \gamma_q(x)  \big\vert &=  \big\vert \mathbb{E}\big[ q(0)q(x)\big] \big\vert\\
                            &= \big(  o(1) +    \frac{V_m^2}{m!} \big)  C(\sigma, H_0)^m   L(\vert x \vert)^{2m}  \vert x\vert^{-2m(1-H_0)} \, ,
\end{align*}
as $\vert x \vert \to +\infty$, and the proof is now complete.
\end{proof}

The asymptotic relation \eqref{correlation} implies  the existence of some absolute  constant $C$ satisfying
\begin{align}\label{correlation_particular}
\bv \gamma_q(x) \bv \leq C\, L( \vert x \vert )^{2m}  \vert x \vert^{-2m(1-H_0)}
\end{align}
for any $x\neq 0$.

\subsection{Our main results}
Considering the assumptions above, we are ready to state the main results of this paper. The first theorem concerns the homogenization of \eqref{EP1}.
\begin{theorem}\label{homogenization}
Fix an integer $m\geq 1$ and a real number $H_0\in(1-\frac{1}{2m},1)$. Assume that $q=\{q(x)\}_{x\in\mathbb{R}_+}$ is constructed as in \eqref{q} so that $\{g(x\}_{x\in\mathbb{R}_+}$ is the Gaussian process given
by \eqref{g} and the function $\Phi$ belongs to $\mathscr{G}_m$ and satisfies \eqref{AA1}. Let $u_\e$ be the solution to \eqref{EP1}, let $u_0$ be the solution to the homogenized equation \eqref{homog} and let $f\in L^2((0,1))$. Then, assuming $2\beta<1$, we have

\begin{equation}
\mathbb{E}\|u_\e - u_0\|^2 \,\leq \|f\|^2 \times \begin{cases}
C \e^{2m(1-H_0)}, & {2m(1-H_0)}<2 \beta,\\
C \e^{2\beta}|\log \frac{1}{\e}|, & {2m(1-H_0)} = 2 \beta,\\
C \e^{2\beta} , & {2m(1-H_0)} > 2 \beta.
\end{cases}
\end{equation}
The constant $C$ depends on $m,\, H_0,\, \beta, \, \gamma$ and the uniform bound on the solution operator of \eqref{EP1}. On the other hand, since $0<2m(1-H_0)<1$, if $2\beta\geq 1$, then only the result on the first line above holds.
\end{theorem}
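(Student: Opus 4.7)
The plan is to express $u_\varepsilon-u_0$ through a resolvent identity and reduce matters to a quadratic integral controlled by the autocovariance of $q$. From \eqref{EP1} and \eqref{homog}, one has $(P(x,D)+q_0+q_\varepsilon)(u_\varepsilon-u_0)=-q_\varepsilon u_0$, so $u_\varepsilon-u_0=-\mathcal{G}_\varepsilon(q_\varepsilon u_0)$. The standard resolvent identity gives $\mathcal{G}_\varepsilon=\mathcal{G}-\mathcal{G}_\varepsilon q_\varepsilon \mathcal{G}$, and combining it with the uniform operator bound on $\mathcal{G}_\varepsilon$ and $\|q_\varepsilon\|_\infty\leq\gamma$ yields $\|u_\varepsilon-u_0\|_{L^2}\leq(1+C\gamma)\|\mathcal{G}(q_\varepsilon u_0)\|_{L^2}$. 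It therefore suffices to estimate $\mathbb{E}\|\mathcal{G}(q_\varepsilon u_0)\|^2$, and expanding via the Green kernel together with stationarity gives
\[
\mathbb{E}\|\mathcal{G}(q_\varepsilon u_0)\|^2 = \int_0^1\!\int_0^1\!\int_0^1 G(x,y_1)G(x,y_2)\,u_0(y_1)u_0(y_2)\,\gamma_q\!\bigl(\tfrac{y_1-y_2}{\varepsilon}\bigr) dy_1 dy_2 dx.
\]

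Symmetrising via $|u_0(y_1)u_0(y_2)|\leq \tfrac12(u_0(y_1)^2+u_0(y_2)^2)$ together with $\|u_0\|_{L^2}\leq C\|f\|_{L^2}$ (boundedness of $\mathcal{G}$) reduces matters to bounding, uniformly in $y_1\in(0,1)$,
\[
I(y_1) := \int_{-1}^{1}|\gamma_q(t/\varepsilon)|\left(\int_0^1 |G(x,y_1)G(x,y_1+t)|\,dx\right) dt.
\]
A standard fractional-integral Beta-function computation applied to \eqref{boundG} gives, whenever $2\beta<1$, the bound $\int_0^1 |x-y_1|^{\beta-1}|x-y_2|^{\beta-1}\,dx \leq C|y_1-y_2|^{2\beta-1}$. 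Substituting and performing the change of variable $s=t/\varepsilon$ leads to
\[
I(y_1)\leq C\,\varepsilon^{2\beta}\int_{-1/\varepsilon}^{1/\varepsilon}|s|^{2\beta-1}|\gamma_q(s)|\,ds,
\]
uniformly in $y_1$.

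The trichotomy is then purely analytic. On $|s|\leq 1$ the bound $|\gamma_q|\leq\gamma^2$ contributes a finite constant (since $2\beta>0$). On $|s|>1$, the estimate \eqref{correlation_particular} yields an integrand of order $|s|^{2\beta-1-2m(1-H_0)}L(|s|)^{2m}$: if $2\beta>2m(1-H_0)$ the tail grows like $(1/\varepsilon)^{2\beta-2m(1-H_0)}L(1/\varepsilon)^{2m}$ by Karamata's theorem, producing $\varepsilon^{2m(1-H_0)}$; if the two exponents coincide one picks up an extra $\log(1/\varepsilon)$; and if $2\beta<2m(1-H_0)$ the tail integral converges, leaving $\varepsilon^{2\beta}$. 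The case $2\beta\geq 1$ bypasses the Beta estimate entirely: $\int_0^1 |G(x,y_1)G(x,y_2)|dx$ is uniformly bounded in $(y_1,y_2)$, and a direct Karamata computation gives $\varepsilon\int_{|s|\leq 1/\varepsilon}|\gamma_q(s)|ds\leq C\varepsilon^{2m(1-H_0)}$, which is why only the first line survives in that regime. The main obstacle will be ensuring, via Potter's Theorem \ref{Harry_Potter}, that the slowly varying factor $L(1/\varepsilon)^{2m}$ appearing in the first two regimes can indeed be absorbed into the universal constant $C$, so as to recover the clean exponents of $\varepsilon$ stated in the theorem.
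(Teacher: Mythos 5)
Your proposal is correct and follows essentially the same route as the paper: reduce to $\mathcal{G}(q_\e u_0)$ via the resolvent identity (the paper sets $\chi_\e=-\mathcal{G}q_\e u_0$ and writes $u_\e-u_0=\chi_\e-\mathcal{G}_\e q_\e\chi_\e$), expand the second moment through the Green kernel, apply the convolution estimate for kernels $|x-y|^{\beta-1}$ to reduce to a one-dimensional integral against $|\gamma_q(\cdot/\e)|$, split that integral at scale $\e$ (the paper's $D_{1,\e}$, $D_{2,\e}$), and read off the trichotomy. The one loose end you flag --- absorbing the factor $L(1/\e)^{2m}$ --- is resolved in the paper not by Potter's theorem but by explicitly invoking boundedness of $L$, which is indeed what is needed to drop the slowly varying factor in the regimes where it survives.
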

It follows from this theorem that the homogenization error is of order $\e^{m(1-H_0)}$. The question that now arises is the limiting distribution of the rescaled error that is  \begin{equation}\label{eee}\frac{u_\e-u_0}{\e^{m(1-H_0)}}.\end{equation}
\begin{remark}Recall that $H=1+m(H_0-1)$ and let \begin{equation}\label{d}d(x) := \sqrt{\frac{m!}{H(2H-1)}} x^H L(x)^m. \end{equation} We set
\begin{equation}\label{X}X(\e) = \e d(\frac{1}{\e}) =\sqrt{  \frac{m!}{(1+m(H_0-1))(1+2m(H_0-1))} }  \e^{m(1-H_0)}L(1/\e)^m.\end{equation}
It is well known, from properties \eqref{propp} of slowly varying function, that
\begin{align} \label{SL_fact1}
\lim_{\e\downarrow 0} \e^{m(1-H_0)} L(\frac{1}{\e})^m = 0 \,\, .
\end{align}
Then, due to the boudness of $L$, one can work with $X(\e)$ instead of $\varepsilon^{m(1-H_0)}$ on the denominator of \eqref{eee}.
\end{remark}
As we will see later, this choice allows us to apply certain approaches in order to achieve our next results.

Before stating the main theorem concerning the limiting distribution of the random fluctuations, we need another assumption on $\Phi$.\\
\bigskip
{\it \# \underline{\textbf{More assumptions on $\Phi$}.}}\\
The function $\Phi$ satisfies
\begin{equation}\label{3}
  \int_\R \vert \hat{\Phi}(\xi)\vert \big(1+ |\xi|^3\big) \,d\xi<\infty,
\end{equation}
where $\hat{\Phi}$ denotes the Fourier transform of $\Phi$.
\begin{theorem}\label{Non-central-limit-theorem}
Let $u_\varepsilon,\, u_0,\, q(x)$ and $f$ be as in the previous theorem. Assume that the function $\Phi$ appearing in the expression of $q$ satisfies also estimate \eqref{3}. Finally, we assume that the Green function $G(x,y)$ is Lipschitz continuous in $x$ with Lipschitz constant $\text{Lip}(G)$ uniform in $y$. Then, for each $\e>0$, the random fluctuation $u^\e- u_0$ is a continuous process on $[0,1]$. Moreover, we have the following convergence in law on $C([0,1])$ endowed with the supremum norm as $\e\to 0$:
\begin{align*} \frac{u_\e(x)- u_0(x)}{X(\e) } \Longrightarrow  -\frac{V_m}{m!}  \int_0^1 G(x,y)  u_0(y)\, dZ^{(m)}_H(y) \,, \end{align*} where $Z^{(m)}_H$ is the Hermite process of order $m$ and self-similar index $H =m(H_0 - 1) + 1$ defined below in Section \ref{section3} and $X(\e)$ is given in \eqref{X}.
\end{theorem}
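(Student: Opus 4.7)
The plan is to use the resolvent identity to decompose $u_\e - u_0$ into a term linear in $q_\e$ plus a higher-order remainder, identify the weak limit of the linear part via Taqqu's non-central limit theorem, and upgrade pointwise-in-$x$ convergence to convergence in $C([0,1])$ through a Kolmogorov-type tightness estimate.

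Since $u_\e = \mathcal{G}_\e f$ and $u_0 = \mathcal{G} f$, the identity $\mathcal{G}_\e = \mathcal{G} - \mathcal{G} q_\e \mathcal{G}_\e$ gives $u_\e - u_0 = T_\e + \mathcal{R}_\e$, where
\begin{equation*}
T_\e(x) := -\int_0^1 G(x,y)\, u_0(y)\, q(y/\e)\, dy, \qquad \mathcal{R}_\e := \mathcal{G} q_\e \mathcal{G}_\e q_\e u_0.
\end{equation*}
The remainder carries two copies of $q_\e$, so I would prove $\sup_x \E[\mathcal{R}_\e(x)^2] = o(X(\e)^2)$ by refining the double-integral estimate behind Theorem \ref{homogenization}: one factor of $q_\e$ is absorbed through the identity $\mathcal{G}_\e q_\e u_0 = u_0 - u_\e$ (for which Theorem \ref{homogenization} already gives an $L^2$ bound), the other through the covariance decay \eqref{correlation_particular}, combined with the uniform-in-$\omega$ operator bound on $\mathcal{G}_\e$ and the singularity estimate \eqref{boundG}.

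For the leading term, set $N_\e(t) := X(\e)^{-1}\int_0^t q(y/\e)\, dy$, which by definition of $X(\e)$ equals $d(1/\e)^{-1}\int_0^{t/\e} q(u)\, du$. Taqqu's theorem (Theorem \ref{Taqqu79}) applied to $q = \Phi(g)$ with $\Phi \in \mathscr{G}_m$ and covariance \eqref{correlation} yields $N_\e \Rightarrow \tfrac{V_m}{m!}\, Z^{(m)}_H$ in $C([0,1])$. Writing $T_\e(x)/X(\e) = -\int_0^1 G(x,y) u_0(y)\, dN_\e(y)$ and integrating by parts, one recognises this as a continuous functional of $N_\e$ once $y \mapsto G(x,y) u_0(y)$ is approximated by $C^1$ functions outside the singularity at $y = x$. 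I would control the approximation error using \eqref{boundG}, the regularity hypothesis \eqref{3} on $\Phi$, and the Wiener-integral construction with respect to $Z^{(m)}_H$ from Section \ref{section3}, and then the continuous-mapping theorem delivers pointwise-in-$x$ convergence to $-\tfrac{V_m}{m!}\int_0^1 G(x,y) u_0(y)\, dZ^{(m)}_H(y)$.

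To upgrade to convergence in $C([0,1])$, the Lipschitz hypothesis on $G(x,y)$ in $x$ (uniform in $y$) together with \eqref{correlation_particular} gives $\E[|T_\e(x_1) - T_\e(x_2)|^2] \leq C|x_1 - x_2|^2\, X(\e)^2$ through the usual double-integral isometry, and the same type of argument propagates to $\mathcal{R}_\e$; combined with the pointwise convergence this yields tightness in $C([0,1])$. The main obstacle I anticipate is the Stieltjes-to-Wiener passage in the previous paragraph: justifying $\int_0^1 G(x,y) u_0(y)\, dN_\e(y) \Rightarrow \tfrac{V_m}{m!} \int_0^1 G(x,y) u_0(y)\, dZ^{(m)}_H(y)$ requires simultaneous control of the singularity of $G$ at $y = x$ and of the limited path regularity of the Hermite process, and this is precisely where the explicit kernel representation of the Wiener integral with respect to $Z^{(m)}_H$ developed in Section \ref{section3} does the essential work.
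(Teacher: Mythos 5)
Your overall architecture (resolvent expansion into a term linear in $q_\e$ plus a quadratic remainder, Taqqu-type convergence for the linear term, Kolmogorov tightness via the Lipschitz continuity of $G$, Slutsky to conclude) is the same as the paper's, and your identity $u_\e-u_0=-\mathcal{G}q_\e u_0+\mathcal{G}q_\e\mathcal{G}_\e q_\e u_0$ is correct. However, there is a genuine gap in your treatment of the remainder. Writing $\mathcal{R}_\e=\mathcal{G}q_\e\mathcal{G}_\e q_\e u_0=\mathcal{G}q_\e(u_0-u_\e)$ and absorbing one factor of $q_\e$ through Theorem \ref{homogenization} only yields $\sup_x\E\big[\mathcal{R}_\e(x)^2\big]\le C\,\E\|u_\e-u_0\|^2=O\big(\e^{2m(1-H_0)}\big)$, whereas $X(\e)^2\asymp\e^{2m(1-H_0)}L(1/\e)^{2m}$; these are of the \emph{same} order when, e.g., $L$ is constant, so this bound is not $o(X(\e)^2)$. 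You cannot recover the missing smallness by ``using the covariance decay on the other $q_\e$'': the inner factor $u_0-u_\e$ is random and correlated with $q_\e$, so the second-moment computation does not reduce to $\gamma_q$. The paper's resolution is to expand one step further, $\mathcal{R}_\e=\mathcal{G}q_\e\mathcal{G}q_\e u_0+\mathcal{G}q_\e\mathcal{G}q_\e(u_\e-u_0)$. The first piece has purely deterministic kernels and is controlled by a fourth-moment estimate for $q$ --- this is precisely where hypothesis \eqref{3} on $\hat{\Phi}$ enters, and your proposal never uses \eqref{3} at the place it is actually needed. The second piece gains \emph{both} the factor $\e^{m(1-H_0)}$ from Theorem \ref{homogenization} and an extra factor $X(\e)$ from the oscillatory inner integral $\int G\,q_\e\,G\,dy$ (which has deterministic integrand apart from $q_\e$), hence is $o(X(\e))$ after Cauchy--Schwarz.

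Two smaller points on the leading term. Theorem \ref{Taqqu79} gives only finite-dimensional convergence of $N_\e$, so your continuous-mapping step requires a separate tightness argument for $N_\e$ in $C([0,1])$ in addition to the approximation of $y\mapsto G(x,y)u_0(y)$ near the singularity; what you propose to re-derive is essentially the content of Theorem \ref{atef}, which the paper simply invokes and applies to the linear combinations $\sum_k\lambda_k G(x_k,\cdot)u_0(\cdot)$ to get finite-dimensional convergence of $I_\e$ directly. Finally, note that ``pointwise-in-$x$ convergence in law plus tightness'' does not identify the limit as a process --- one needs joint finite-dimensional convergence; your functional continuous-mapping argument would deliver this if completed, but your write-up does not claim it.
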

Theorem \ref{Non-central-limit-theorem} should be seen as an extension and unified approach of the main results of \cite{BG12}. More precisely, the case where the Hermite rank of $\Phi$ is $m=1$ and the slowly varying function $L$ is a positive constant, corresponds to \cite[Theorem 2.5]{BG12} and involves the fractional Brownian motion  in the limit. In higher dimensions, the case where $m=1$ has been studied in \cite{BG12} but for arbitrary Hermite rank $m>1$ it is usually very hard to study the convergence of random fluctuations. We should firstly find the counterpart of Hermite process in Higher dimensions.
\section{The Hermite process}\label{section3}
Before giving the proof of the mains theorems, we briefly recall some general facts about the Hermite process, the continuous version of the non-central limit theorem and the stochastic integral with respect to Hermite process.
\subsection{Taqqu's theorem/ Non-central limit theorem}
\begin{definition}
Let $k\geq 1$ be an integer and
\begin{equation}
H\in(\frac{1}{2}, 1).
\end{equation}
Set \begin{equation}H_0\,=\, 1-\frac{1-H}{k} \,\in\, (1-\frac{1}{2k}, 1),\end{equation} so that $H=1-k(1-H_0)$. The Hermite process $\big\{Z^{k}_H(t)\big\}_{t\in\mathbb{R}}$ of order $k$ and Hurst index $H$ is defined as
\begin{equation}\label{Hermite}
 Z^{(k)}_H(t)\, = \, A_{k,H_0} \Big( \int_{-\infty}^\infty B(d\xi_1)\int_{-\infty}^{\xi_1} B(d\xi_2) \cdot\cdot\cdot  \int_{-\infty}^{\xi_{k-1}} B(d\xi_k)  \int_0^t \prod_{i=1}^k (s- \xi_i)^{H_0 - 3/2}{\bf 1}_{(\xi_i < s)} \, ds   \Big),
\end{equation}
where $B(du)$ is a Gaussian random measure on $\mathbb{R}$, with Lebesgue control measure $du$, $\xi_+=\max \{\xi,0\}$ and $A_{k,H_0}$ is a normalizing constant. The Hermite process $\big\{Z^{k}_H(t)\big\}_{t\in\mathbb{R}}$ is called standard if $\mathbb{E} \big(\{Z^{k}_H(1)\big)^2 =1$.
\end{definition}
\begin{propertie}
\begin{itemize}
  \item Note that $\big\{Z^{k}_H(t)\big\}_{t\in\mathbb{R}}$ lives in the Wiener chaos of order $k$, which is non-Gaussian unless $k = 1$ or $t = 0$.
  \item The Hermite process of order one is the Fractional Brownian motion and in this case $H = H_0$. The Hermite process of order two is the Rosenblatt process.
  \item Hermite processes are well-defined, they have stationary increments and they are $H$-self-similar. The Hermite process of order $k$ is standard when
  \begin{align}\label{coeff_Hermite_defn}
 A_{k,H_0} := \left\{ \,\, \frac{k! \big[ k(H_0 - 1) + 1 \big] \big[ 2k(H_0 - 1) + 1   \big]}{\Big( {\displaystyle\int_0^\infty (u + u^2 )^{H_0 - 3/2} \, du}  \Big)^k}  \,\, \right\} ^{1/2}.
 \end{align}
\end{itemize}
\end{propertie}
Now, let $g$ be the centered stationary Gaussian process defined by (\ref{g}), and assume that $\Phi\in L^2(\R,\nu)$ has Hermite rank $m\geq 1$.
Recall $d(x)$ from \eqref{d}. The main property of $d(x)$ is that the variance of ${\displaystyle \dfrac{1}{d(x)}\int_0^x H_m(g(y))\, dy} $ turns out to be asymptotically equal to $1$ as $x\to +\infty$.

The following result, due to Taqqu in 1979, is the key ingredient  in our proofs.
 \begin{theorem}(\cite[Lemma 5.3]{Taqqu79})\label{Taqqu79}
As $T\to+\infty$,  the process
\begin{equation}\label{int-Taqqu}
 Y_T(x) = \frac{1}{d(T)} \int_0^{Tx}\Phi\big[ g(y) \big]  \, dy,\quad  x\in\R_+,
\end{equation}
 converges  to ${\displaystyle  \frac{V_m}{m!}  Z^{(m)}_H(x)}$ in the sense of finite-dimensional distributions, where $Z^{(m)}_H(x)$ is the standard Hermite process of order $m$ and self-similar index $H =m(H_0 - 1) + 1$.
 \end{theorem}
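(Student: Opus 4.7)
The plan is to Hermite-expand $\Phi$ as $\Phi = \sum_{n\ge m} \frac{V_n}{n!} H_n$ (with $V_0=\cdots=V_{m-1}=0$, since the Hermite rank of $\Phi$ is $m$) and to analyze the pieces
\[
S_T^{(n)}(x) := \frac{1}{d(T)}\int_0^{Tx} H_n(g(y))\, dy
\]
separately, so that $Y_T(x) = \sum_{n\ge m} \frac{V_n}{n!} S_T^{(n)}(x)$. The goal is to prove that $S_T^{(m)}(x)\Rightarrow Z^{(m)}_H(x)$ in the sense of finite-dimensional distributions while $\E[S_T^{(n)}(x)^2]\to 0$ for every $n>m$, and then to combine the two using Minkowski's inequality together with $\sum_n V_n^2/n! < \infty$ to identify the limit as $\frac{V_m}{m!}Z^{(m)}_H(x)$.

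\textbf{Main term via multiple Wiener--It\^o integrals.} Since $g(y) = \int_\R e(y-\xi)\, dW_\xi$ with $\int_\R e^2 = 1$, the Wiener chaos representation of Hermite polynomials of Gaussian stochastic integrals gives $H_m(g(y)) = I_m\bigl(e(y-\cdot)^{\otimes m}\bigr)$, where $I_m$ denotes the $m$-fold multiple Wiener--It\^o integral. Stochastic Fubini then yields
\[
\int_0^{Tx} H_m(g(y))\, dy = I_m(K_{T,x}),\qquad K_{T,x}(\xi_1,\ldots,\xi_m) = \int_0^{Tx}\prod_{i=1}^m e(y-\xi_i)\, dy.
\]
After the change of variables $y = Ts$, $\xi_i = T\eta_i$ and division by $d(T)$, the asymptotic (3c) together with the explicit form of $d(T)$ in \eqref{d} should produce a pointwise limit for the rescaled kernel proportional to
\[
\int_0^x \prod_{i=1}^m (s-\eta_i)_+^{H_0-3/2}\, ds,
\]
which is precisely the kernel defining $Z^{(m)}_H(x)$ in \eqref{Hermite}. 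Since convergence of multiple Wiener--It\^o integrals is equivalent to $L^2(\R^m)$ convergence of the symmetrized kernels, one would upgrade pointwise to $L^2$ convergence via dominated convergence, based on the envelope (3b) and Potter's theorem \ref{Harry_Potter}. The matching of normalizing constants then follows from the factor $A_{m,H_0}$ built into \eqref{coeff_Hermite_defn} and the explicit shape of $d(T)$.

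\textbf{Remainder terms, finite-dimensional extension, and the main obstacle.} For $n>m$, orthogonality within each Wiener chaos gives
\[
\E\bigl[S_T^{(n)}(x)^2\bigr] = \frac{n!}{d(T)^2}\int_0^{Tx}\!\!\int_0^{Tx}\gamma_g(u-v)^n\, du\, dv;
\]
splitting into the case $\int|\gamma_g|^n<\infty$ (where the double integral grows like $T$) and the case $\int|\gamma_g|^n=\infty$ (where it grows like $T^{2+n(2H_0-2)}L(T)^{2n}$), in both situations one checks that the ratio against $d(T)^2\sim \frac{m!}{H(2H-1)}T^{2H}L(T)^{2m}$ tends to $0$, using $H>\tfrac12$ and $n>m$. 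For finite-dimensional convergence at $(x_1,\ldots,x_k)$, the same kernel computation yields joint $L^2$ convergence of the symmetric kernels $K_{T,x_i}$, hence joint in-law convergence of $(S_T^{(m)}(x_i))_{i=1}^k$. The \emph{main obstacle} I expect is the $L^2$ convergence of the rescaled kernel itself: one has to split $\R^m$ according to whether each $\eta_i$ lies in a positive or negative region with respect to $[0,x]$. In the positive regions the clean asymptotic (3c) suffices, but in the negative regions only the tail bound (3d) on $\int_{-\infty}^0 |e(u)\,e(xy+u)|\, du$ controls the integrand; constructing an integrable dominant uniform in $T$ in those regions is the delicate technical step where the fine hypotheses on $e$ (in particular (3d)) are really used.
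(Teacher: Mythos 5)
The paper offers no proof of this statement: it is imported verbatim as \cite[Lemma 5.3]{Taqqu79}, so there is no internal argument to compare against. Your sketch reproduces, correctly, the standard proof from that reference (Hermite-rank reduction plus $L^2$ convergence of the rescaled $m$-th chaos kernels, with hypothesis (3d) controlling the negative-$\eta_i$ region); the only point worth tightening is that killing the tail $\sum_{n>m}\frac{V_n}{n!}S_T^{(n)}$ requires the uniform bound $|\gamma_g|^n\le|\gamma_g|^{m+1}$ for $n\ge m+1$ so that the whole sum of variances, not just each term, is $o(1)$ --- which is immediate from $|\gamma_g|\le 1$ and $\sum_n V_n^2/n!<\infty$.
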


\subsection{Hermite Wiener Integrals and their Deterministic Integrands}

The Wiener integrals with respect to the Hermite process $Z^{(k)}_H$  on $\mathbb{R}$ are integrals of the form
\begin{equation}\label{}
\int_\mathbb{R} f(u)\,dZ^{(k)}_H(u):= \mathcal{I}^k_H(f),\end{equation}
where $f$ is a deterministic function. Let us recall how we classically define the Wiener integral with respect to the Brownian motion: first we define it for elementary functions and establish the isometry property, and then we extend the integral for general functions via isometry.  In the same way, one can define the Wiener integral with respect to the Hermite process $Z^{(k)}_H$, while the annoying issue is that  one should find a suitable space for the latter extension.

Let $\mathscr{E}$ be the set of elementary functions, that is, the set of all functions  $f$ of the form
 $$ f(x) = \sum_{i=1}^\ell a_i \mathbf{1}_{(t_i, t_{i+1}] }(x) $$
with $\ell\in\N^\ast$, $a_i\in\R$, $t_i < t_{i+1}$.

For such $f$, we define the Wiener integral with respect to $Z^{(k)}_H$ in the usual way, that is, as a linear functional over $\mathscr{E}$:
 $$\int_\R f(x) \, dZ^{(k)}_H(x)  =  \sum_{i=1}^\ell a_i\Big[ Z^{(k)}_H(t_{i+1}) - Z^{(k)}_H(t_i)  \Big] \,\, .  $$
One can verify easily that this definition is independent of choices of representation for elementary functions.   Now we introduce the space of (deterministic) integrands for this Wiener integral, namely,
        \begin{equation}
  \Lambda^H = \left\{ \,\, f:  \R \lto\R  \, \Big\vert\,  \quad\int_\R \int_\R f(u)f(v) \vert u-v \vert^{2H-2} \, du\, dv  <  + \infty  \,\,  \right\},
            \end{equation}
equipped with the norm
\begin{equation}
  \| f \|^2_{\Lambda^H} = H(2H-1)\int_\R\int_\R   f(u)f(v) \vert u-v\vert^{2H-2}\, du \, dv \, .
\end{equation}
When $h\in\mathcal{E}$, it is straightforward to check the following isometry property:
           $$ \E\left[\left(\int_\R h(x)dZ(x)\right)^2 \right] = \|h\|^2_{\Lambda^H}. $$
As a consequence, one can define the Wiener integral $\int_\R f(x)dZ(x)$  for any $f\in \Lambda^H$, by a usual approximation procedure.

It is well known (thanks to  \cite{PT00}) that  $\big( \Lambda^H, \| \cdot \| _{\Lambda^H} \big)$ is a Hilbert space that contains distributions in the sense of Schwartz. To overcome this problem, we shall restrict ourselves to the proper subspace
 $$\vert {\Lambda^H} \vert =  \left\{ \,\, f:\R\to\R  \, \Big\vert\,  \quad\int_\R\int_\R \vert f(u)f(v) \vert  \vert u-v \vert^{2H-2} \, du\, dv  <  + \infty  \,\,  \right\}  $$
equipped with the norm
$$   \|f\|^2_{\vert  \Lambda^H  \vert} = H(2H-1) \int_\R\int_\R \vert f(u)f(v)\vert \vert u-v\vert^{2H-2}\, du \, dv \, .$$
We then have (see \cite[Proposition 4.2]{PT00})
\begin{equation}
 L^{1} \big(\R \big)\cap L^{2} \big(\R \big)\subset L^{1/H} \big(\R \big)\subset \vert \Lambda^H \vert \subset  \Lambda^H .\label{inclusion}
\end{equation}
Moreover, $\big( \vert \Lambda^H \vert,   \| \cdot \|_{\vert \Lambda^H \vert} \big)$ is  a Banach space, in which the set $\mathcal{E}$  is dense. So for $h\in  \vert \Lambda^H \vert$, we can define
\begin{equation}\label{cv}
\int_\R h(x) \, dZ(x) = \lim_{n\to+\infty}\int_\R h_n(x) \, dZ(x) \,,
\end{equation}
where $(h_n)$ is any sequence of $\mathcal{E}$ converging to $h$ in  $\big(  \vert \Lambda^H \vert,   \|\cdot \|_{ \vert \Lambda^H \vert} \big)$; the convergence in (\ref{cv}) takes place in $L^2(\Omega\big)$.

For a detailed account of this integration theory, one can refer to \cite{MT07,PT00}.

\section{Proofs of the main results}\label{section4}
\subsection{Proof of Theorem \ref{homogenization}}
Our proof is strictly similar to that of \cite[Theorem 2.1]{BG12} but in an one dimensional space and in a general Long-range dependence setting. We start by giving the following lemma
\begin{lemma}
Let all assumptions of theorem \ref{homogenization} be satisfied. Moreover, let $\mathcal{G}$ be the operator defined in  \eqref{operator}. Then, assuming $2\beta<1$ we get:

\begin{equation}
\mathbb{E}\|\mathcal{G}q_\e f\|^2 \,\leq \|f\|^2 \times \begin{cases}
C \e^{2m(1-H_0)}, & {2m(1-H_0)}<2 \beta\\
C \e^{2\beta}|\log \frac{1}{\e}|, & {2m(1-H_0)} = 2 \beta\\
C \e^{2\beta} , & {2m(1-H_0)} > 2 \beta.
\end{cases}
\end{equation}
The constant $C$ depends only on $m, H_0, \|q\|_\infty$ and the bound for $\|\mathcal{G}_\e\|_{\mathcal{L}}$. When $2\beta\geq1$, then only the first line holds.
\end{lemma}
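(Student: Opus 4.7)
The plan is to begin from the kernel representation
\[
\mathcal{G}q_\varepsilon f(x) \,=\, \int_0^1 G(x,y)\,q(y/\varepsilon,\omega)\,f(y)\,dy,
\]
compute $\mathbb{E}\|\mathcal{G}q_\varepsilon f\|^2$ by taking a second moment, and use the stationarity of $q$ together with Fubini to arrive at
\[
\mathbb{E}\|\mathcal{G}q_\varepsilon f\|^2 = \int_0^1\!\!\int_0^1 f(y)f(z)\,\gamma_q\!\left(\tfrac{y-z}{\varepsilon}\right) K_G(y,z)\,dy\,dz,\qquad K_G(y,z):=\int_0^1 G(x,y)G(x,z)\,dx.
\]
The Green-function bound \eqref{boundG} and a standard beta-type integral estimate then yield $|K_G(y,z)|\leq C|y-z|^{2\beta-1}$ when $2\beta<1$, and a uniform-in-$(y,z)$ bound when $2\beta\geq 1$ (each factor $|\cdot|^{-(1-\beta)}$ then lies in $L^2_{\text{loc}}$ and Cauchy--Schwarz closes the estimate).

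Having reduced everything to a scalar double integral, I would apply the Schur-type inequality $2|f(y)f(z)|\leq |f(y)|^2+|f(z)|^2$ combined with the symmetry of the positive kernel $|y-z|^{2\beta-1}|\gamma_q((y-z)/\varepsilon)|$ to obtain
\[
\mathbb{E}\|\mathcal{G}q_\varepsilon f\|^2 \leq C\,\|f\|^2 \sup_{y\in[0,1]} \int_0^1 |y-z|^{2\beta-1}\,\big|\gamma_q\big((y-z)/\varepsilon\big)\big|\,dz,
\]
and then change variables $u=(z-y)/\varepsilon$, which rescales the supremum to $\varepsilon^{2\beta}\int_{|u|\lesssim 1/\varepsilon}|u|^{2\beta-1}|\gamma_q(u)|\,du$. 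Splitting at $|u|=1$ leaves a harmless short-range piece (using that $|u|^{2\beta-1}$ is integrable near the origin since $\beta>0$, and $|\gamma_q|\leq \|q\|_\infty^2$) and a tail piece controlled by the long-range bound \eqref{correlation_particular}, namely $|\gamma_q(u)|\leq C\,L(u)^{2m}|u|^{-2m(1-H_0)}$.

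The entire trichotomy is then read off from the tail integral
\[
\int_1^{1/\varepsilon} u^{2\beta-1-2m(1-H_0)}\,L(u)^{2m}\,du
\]
via Karamata's theorem: when $2m(1-H_0)<2\beta$ the exponent exceeds $-1$ and the integral is dominated by its upper endpoint, so multiplying by $\varepsilon^{2\beta}$ gives $\varepsilon^{2m(1-H_0)}L(1/\varepsilon)^{2m}$; the critical equality $2m(1-H_0)=2\beta$ contributes an extra $\log(1/\varepsilon)$; and when $2m(1-H_0)>2\beta$ the integral converges as $\varepsilon\to 0$, leaving only the $\varepsilon^{2\beta}$ prefactor. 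The residual case $2\beta\geq 1$ is simpler: one uses the uniform bound on $K_G$ and reduces directly to $\varepsilon\int_{|u|\lesssim 1/\varepsilon}|\gamma_q(u)|\,du$, which yields $\varepsilon^{2m(1-H_0)}$ by the same Karamata argument, valid because $0<2m(1-H_0)<1$. The main technical obstacle is the bookkeeping of the slowly varying factor $L(1/\varepsilon)^{2m}$ that appears in the first two regimes but not in the stated bound; this is absorbed into the constant $C$ by invoking Potter's theorem (Theorem \ref{Harry_Potter}) to write $L(1/\varepsilon)^{2m}\leq C_\delta\,\varepsilon^{-\delta}$ for an arbitrarily small $\delta>0$, chosen so as not to spoil the dominant exponent in each case.
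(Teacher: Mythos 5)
Your overall route is the same as the paper's: square the kernel representation, integrate out $x$ using the Green-function singularity bound \eqref{boundG} together with the convolution estimate (the paper cites \cite[Lemma A.1]{kk} for exactly your ``beta-type integral''), reduce to the one-dimensional integral $\int_0^1 |z|^{2\beta-1}\,\vert\gamma_q(z/\e)\vert\,dz$ by a symmetrization/Cauchy--Schwarz step, split near and far from the diagonal, and read off the trichotomy from the tail integral. All of that is sound and matches the paper step for step.

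The one step that does not work as written is your final absorption of the slowly varying factor. In the regime $2m(1-H_0)<2\beta$ Karamata gives you $\e^{2m(1-H_0)}L(1/\e)^{2m}$, and applying Potter's bound $L(1/\e)^{2m}\le C_\delta\,\e^{-\delta}$ yields $C_\delta\,\e^{2m(1-H_0)-\delta}$; the exponent is degraded by $\delta$ for \emph{every} $\delta>0$, so there is no admissible choice of $\delta$ that recovers the stated rate $C\e^{2m(1-H_0)}$ --- in that regime the $L$-factor multiplies the dominant term, and ``arbitrarily small'' is not small enough. If $L$ is genuinely unbounded (e.g.\ $L=\log$), the claimed bound $C\e^{2m(1-H_0)}$ is simply false and the correct statement carries the extra factor $L(1/\e)^{2m}$. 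The paper sidesteps this by invoking, at precisely this point, the assumption that $L$ is globally bounded (``$L$ is assumed to be bounded''), consistent with the remark following Theorem \ref{homogenization}. So you should either add that boundedness hypothesis explicitly, or restate the first two lines of the lemma with the factor $L(1/\e)^{2m}$; Potter's theorem cannot substitute for it here. (Your use of Potter is fine in the paper's other tightness estimate, where the $L$-ratio is raised to a power that is then dominated by a strictly positive leftover exponent, but that structure is absent in the borderline-dominant regime of this lemma.) Everything else, including your treatment of the case $2\beta\ge 1$ via the uniform bound on $K_G$ and the single change of variables, is correct.
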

Note that here, since $L(x) =\log x$ is a slowly varying function at infinity, by properties \eqref{propp} we have $$\e^{2m(1-H_0)}|\log \frac{1}{\e}|\to 0,\quad \text{as}\,\,\, \e\to0.$$

\begin{proof}
The $L^2$ norm of $\mathcal{G}q_\e f$ has the following expression

$$\|\mathcal{G}q_\e f\|^2 \,=\, \int_0^1\left(\int_0^1 G(x,y) q_\e(y) f(y) \,dy\right)^2 \,dx.$$
By taking the expectation, we have
\begin{equation}\label{}
  \mathbb{E}\|\mathcal{G}q_\e f\|^2 \,=\,  \int_{(0,1)^3} G(x,y) G(x,z) \gamma_q\big(\frac{y-z}{\e}\big) f(y) f(z) \,dydzdx.
\end{equation}
Then, using estimate \eqref{boundG}, we have
\begin{eqnarray}
  \mathbb{E}\|\mathcal{G}q_\e f\|^2 &\leq& C\int_{(0,1)^3} \frac{1}{\vert y-x \vert^{1-\beta}} \frac{1}{\vert z-x \vert^{1-\beta}}\Big\vert \gamma_q\big(\frac{y-z}{\e}\big) f(y) f(z)\Big\vert \,dydzdx \label{ha1}\\
   &\leq& C \int_{(0,1)^2} \frac{1}{\vert y-z \vert^{1-2\beta}} \Big\vert \gamma_q\big(\frac{y-z}{\e}\big) f(y) f(z)\Big\vert \,dydz\label{ha2}\\
   &=& C \int_0^1\int_y^{y-1} \frac{1}{\vert z\vert^{1-2\beta}}\Big \vert \gamma_q\big(\frac{z}{\e}\big) f(y)f(y-z)\Big\vert \,dydz \notag \\
   &\leq& C\|f\|^2 \int_0^1 \frac{1}{\vert z\vert^{1-2\beta}} \big\vert \gamma_q\big(\frac{z}{\e}\big)\big\vert\,dz.\label{ha3}
\end{eqnarray}
To obtain the second inequality, we used \cite[Lemma A.1]{kk} which estimates the convolution of two potential functions and says that, for given two positive numbers $\alpha_1$ and $\alpha_2$ belonging to $(0,1)$ and two points $x\neq y$, we have
$$\int_0^1 \frac{1}{\vert z-x\vert^{\alpha_1}}\cdot \frac{1}{\vert z-y \vert^{\alpha_2}}\, dz\leq \begin{cases}
C \vert x-y\vert ^{1-(\alpha_1+\alpha_2)}, & \alpha_1+\alpha_2>1,\\
C (\log \vert x-y\vert +1), & \alpha_1+\alpha_2=1,\\
C  , & \alpha_1+\alpha_2<1.
\end{cases}$$
Now, we decompose the integration domain in \eqref{ha3} into two subdomains $D_{1, \e}$ and $D_{2,\e}$ as follows
\begin{equation}\label{}
  D_{1,\e} : = \big\{x\in(0,1),\quad \vert x \vert \leq M\e \big\} \qquad \text{and} \qquad D_{2,\e} = (0,1) \setminus D_{1,\e}.
  \end{equation}
The integration on $D_{1,\e}$ gives
\begin{equation}\label{aaa}\int_{D_{1,\e}}\frac{1}{\vert z\vert^{1-2\beta}} \vert \gamma_q\big(\frac{z}{\e}\big)\vert\,dz\leq \int_{D_{1,\e}}\frac{1}{\vert z\vert^{1-2\beta}}\,dz = \frac{M^{2\beta}}{2\beta} \e^{2\beta}.\end{equation}
Recall that the asymptotic behavior of the autocovariance function of $q$ implies the existence of some constant $C$ such that
\begin{align*}
\bv \gamma_q(x) \bv \leq C\, L( \vert x \vert )^{2m}  \vert x \vert^{-2m(1-H_0)}\,\,, \quad \forall\, x\in D_{2,\e}.
\end{align*}
Then, the integration over $D_{2,\e}$ is as follows
\begin{eqnarray*}
  \int_{D_{2,\e}}\frac{1}{\vert z\vert^{1-2\beta}} \vert \gamma_q\big(\frac{z}{\e}\big)\vert\,dz &\leq & C \int_{M\e}^1 \frac{1}{\vert z\vert^{1-2\beta}}L( \vert \frac{z}{\e} \vert )^{2m}  \vert \frac{z}{\e} \vert^{-2m(1-H_0)}\,dz \notag \\
   &=&  C \e^{2\beta}\int_M^{1/\e} \frac{L(|y|)^{2m}}{|y|^{1-2\beta+2m(1-H_0)}} \,dy\notag \\
   &\leq & C \e^{2\beta}\int_M^{1/\e} \frac{1}{|y|^{1-2\beta+2m(1-H_0)}} \,dy,\,\text{$L$ is assumed to be bounded}.\notag
\end{eqnarray*}
Where $2\beta=2m(1-H_0)$, the above integral equals $ C \e^{2\beta}(\log(\frac{1}{\e})-\log(M))$, and is of order $\varepsilon^{2m(1-H_0)}\vert \log \frac{1}{\varepsilon}\vert $. When $2\beta\neq 2m(1-H_0)$, the integral equals $C\varepsilon^{2\beta}(\varepsilon^{2m(1-H_0)-2\beta}-M^{2\beta-2m(1-H_0)})$.\\
Finally, combining the last estimates with \eqref{aaa} the lemma is proved.
\end{proof}
\begin{proof}[Proof of Theorem \ref{homogenization}]
The homogenized solution satisfies $$P(x,D)  u_0 \,=\,f.$$
We define $\chi_\e \,=\, -\mathcal{G} q_\e  u_0$. It is clear that $\chi_\e$ is the solution of
\begin{equation}\label{}
P(x,D) \chi_\e \,=\, -q_\e  u_0.
\end{equation}
Now, compare the two above equations with the one for $u_\e$, i.e. \eqref{EP1}. We get
\begin{equation}\label{}
  (P(x,D) +q_0 +q_\e) (u_\e-u_0-\chi_\e) \,=\, -q_\e\chi_\e,
\end{equation}
Since this equation is well-posed almost everywhere in $\Omega$ , we have $$u_\e-u_0 \,=\, \chi_\e-\mathcal{G}_\e q_\e \chi_\e.$$
This implies that:
\begin{equation}\label{}
  \|u_\e-u_0\| \,\leq\, \|\chi_\e\| + \|\mathcal{G}_\e\|_{\mathcal{L}(L^2)}\|q\|_\infty\|\chi_\e\|.
\end{equation}
Recall that the operator norm $\|\mathcal{G}_\e\|_{\mathcal{L}(L^2)}$ can be bounded uniformly in $\Omega$ and
\begin{equation}\label{}
  \|u_\e-u_0\| \,\leq\, C\|\chi_\e\|
\end{equation}
where $C$ depends on $\|q\|_\infty$ and the bound for $\|\mathcal{G}_\e\|_{\mathcal{L}}$
Finally, since $\chi_\e$ is of the form of $\mathcal{G}q_\e f$, we take the expectation and apply the previous lemma to complete the proof.
\end{proof}
Now, we will turn to the most important part of this work, in which, we will prove the convergence of the rescaled random fluctuations.
\subsection{Proof of theorem \ref{Non-central-limit-theorem}}
Before giving the proof of theorem \ref{Non-central-limit-theorem}, we recall some results that concern the convergence of random oscillatory integrals.

\begin{theorem}(\cite[Theorem 1.1]{lechiheb})\label{atef}
Let $g$ be the centered stationary Gaussian process defined by (\ref{g}), and assume that $\Phi\in L^2(\R,\nu)$ has Hermite rank $m\geq 1$.
Then, for any   $h\in C\big([0,1]\big)$, the following convergence in law takes place
\begin{equation}\label{goal}
 M^\e_h : =\frac{1}{ X(\e) }\int_0^1 \Phi[g(\frac{x}{\e})]  h(x)\,  dx \xrightarrow{\e\downarrow 0}   M^0_h := \frac{ V_m}{m!}  \int_0^1 h(x)\, dZ(x) \, ,
  \end{equation}
where $Z$ is the $m$th-Hermite process defined by \eqref{Hermite} and  $X(\e)$ is defined by \eqref{X}.
\end{theorem}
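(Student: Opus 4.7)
\textbf{Proof proposal for Theorem \ref{atef}.}

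The plan is to reduce the assertion to Taqqu's theorem (Theorem \ref{Taqqu79}) by a change of variables that rewrites $M^\e_h$ as a Riemann--Stieltjes integral against $Y_T$ with $T=1/\e$, and then to pass to the limit via a density/approximation argument in the space $\vert\Lambda^H\vert$. First I would perform the substitution $y=x/\e$, obtaining
\begin{equation*}
M^\e_h \;=\; \frac{1}{X(\e)}\int_0^1 \Phi[g(x/\e)]\,h(x)\,dx \;=\; \frac{1}{d(1/\e)}\int_0^{1/\e}\Phi[g(y)]\,h(\e y)\,dy.
\end{equation*}
Setting $F_T(x):=\int_0^{Tx}\Phi[g(y)]\,dy$ and $T=1/\e$, one recognizes $Y_T(x)=F_T(x)/d(T)$, and an elementary Stieltjes computation yields the key identity
\begin{equation*}
M^\e_h \;=\; \int_0^1 h(x)\,dY_T(x),
\end{equation*}
understood in the Riemann--Stieltjes sense (it is legitimate since $Y_T$ is absolutely continuous in $x$).

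The second step handles elementary integrands. For a step function $h_n=\sum_{i=1}^{\ell} a_i \mathbf{1}_{(t_i,t_{i+1}]}$ one has $\int_0^1 h_n\,dY_T=\sum_i a_i(Y_T(t_{i+1})-Y_T(t_i))$, so Theorem \ref{Taqqu79} applied to the finite-dimensional vector $(Y_T(t_i))_i$ gives, as $\e\downarrow 0$,
\begin{equation*}
M^\e_{h_n}\;\xrightarrow{\,\text{law}\,}\; \frac{V_m}{m!}\sum_{i=1}^\ell a_i\bigl(Z^{(m)}_H(t_{i+1})-Z^{(m)}_H(t_i)\bigr)\;=\;\frac{V_m}{m!}\int_0^1 h_n\,dZ^{(m)}_H.
\end{equation*}

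The third and decisive step is a uniform-in-$\e$ $L^2$ approximation argument. Given a continuous $h$ on $[0,1]$, approximate it uniformly by step functions $h_n\to h$. On the Hermite-process side, the inclusion $L^1\cap L^2\subset\vert\Lambda^H\vert$ recorded in \eqref{inclusion} together with the isometry $\|\cdot\|_{\Lambda^H}$ gives $\int h_n\,dZ^{(m)}_H\to\int h\,dZ^{(m)}_H$ in $L^2(\Omega)$. On the prelimit side I would prove the variance bound
\begin{equation*}
\Var\bigl(M^\e_{h-h_n}\bigr)\;=\;\frac{1}{X(\e)^2}\int_0^1\!\!\int_0^1 (h-h_n)(x)(h-h_n)(y)\,\gamma_q\!\Bigl(\tfrac{x-y}{\e}\Bigr)\,dx\,dy \;\leq\; C\|h-h_n\|_\infty^{2},
\end{equation*}
uniformly in $\e\in(0,1)$. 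This is where the asymptotic estimate \eqref{correlation_particular}, the definition of $X(\e)$, and Potter's theorem (Theorem \ref{Harry_Potter}) combine: after rescaling $u=(x-y)/\e$, the factor $L(|x-y|/\e)^{2m}/L(1/\e)^{2m}$ is controlled, for any small $\delta>0$, by $C(|x-y|^{\delta m}+|x-y|^{-\delta m})$, and since $2H-1>0$ one may choose $\delta$ so that $2H-2-2m\delta>-1$, leaving an integrable singularity on $[0,1]^2$.

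Granting these three ingredients, a standard Slutsky-type approximation lemma (convergence in law for fixed $n$, plus uniform smallness of $L^2$-errors in $\e$, plus convergence as $n\to\infty$ of the limiting random variables) closes the argument. The main obstacle I anticipate is the uniform variance bound: Potter's theorem gives only a non-uniform comparison of slowly varying functions, so one must carefully split the domain of integration into $\{|x-y|\leq \e M\}$ and its complement, as in the proof of Theorem \ref{homogenization}, and absorb the contribution near the diagonal into $\|h-h_n\|_\infty^2$ while using the Potter estimate on the complement. Everything else — the Stieltjes reformulation, the step-function case, and the limit transition — is essentially bookkeeping once this uniform bound is in hand.
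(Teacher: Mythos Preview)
The paper does not prove Theorem \ref{atef}; it is quoted from \cite[Theorem 1.1]{lechiheb} and used as a black box in the proof of Theorem \ref{Non-central-limit-theorem}. So there is no in-paper proof to compare your proposal against.

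That said, your proposal is a correct and natural argument. The three-step structure --- rewrite $M^\e_h$ as the Stieltjes integral $\int_0^1 h\,dY_T$ with $T=1/\e$, settle the case of step functions by Taqqu's finite-dimensional convergence (Theorem \ref{Taqqu79}), and then close by a uniform-in-$\e$ $L^2$ approximation controlled via Potter's theorem --- is the standard route for this type of result. The uniform variance bound you single out as the crux,
\[
\sup_{\e\in(0,1)}\frac{1}{X(\e)^2}\int_{[0,1]^2}\Bigl|\gamma_q\Bigl(\tfrac{x-y}{\e}\Bigr)\Bigr|\,dx\,dy<\infty,
\]
is precisely the computation the present paper carries out later, in the tightness step for $I_\e$ (the chain of inequalities from \eqref{aaaa} through \eqref{Ah}); your domain-splitting plan into $\{|x-y|\le M\e\}$ and its complement is exactly what is done there. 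So all the ingredients your sketch requires are already available in the text.
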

\begin{remark}\label{rem}
Clearly, the above result still holds true for any function h that is continuous except at finitely
many points.
\end{remark}
We can now proceed with the proof of Theorem \ref{Non-central-limit-theorem}.
\begin{proof}[Proof of Theorem \ref{Non-central-limit-theorem}]
The proof is divided into four steps.\\
{\bf (a) Preparation.}
The equation for $u_\e$ \eqref{EP1} may be formally recast as $$u_\e \,=\, \mathcal{G}(f-q_\e u_\e),$$ where $\mathcal{G}=(P(x,D))^{-1}$, and thus
\begin{equation}\label{}
  u_\e \,=\, \mathcal{G}f- \mathcal{G}q_\e\mathcal{G}f+\mathcal{G}q_\e\mathcal{G} q_\e u_\e.
\end{equation}
Because $ u_0=\mathcal{G}f$, we have
\begin{eqnarray*}
  u_\e- u_0 &=& - \mathcal{G}q_\e u_0+\mathcal{G}q_\e\mathcal{G} q_\e (u_\e- u_0+ u_0) \\
   &=&   - \mathcal{G}q_\e u_0+ \mathcal{G}q_\e\mathcal{G} q_\e u_0+ \mathcal{G}q_\e\mathcal{G} q_\e (u_\e- u_0).
\end{eqnarray*}
Then we can write
\begin{equation}\label{expression}
  \frac{u_\e(x)- u_0(x)}{X(\e)} \,=\, -\mathcal{I}_\e(x) \,+\, \underbrace{ \mathcal{Q}_\e(x) \,+\, r_\e(x)}_{=:\mathcal{R}^\e(x)} \,\, ,
\end{equation}
where
      $$\displaystyle  I_\e(x) \,=\, \frac{1}{X(\e)}\mathcal{G}q_\e u_0(x) \,=\, \frac{1}{X(\e)} \int_\mathbb{R} G(x,y)q_\e(y) u_0(y)\,dy,$$
\begin{eqnarray*}
    \mathcal{Q}_\e(x)&=& \frac{1}{X(\e)}\mathcal{G}q_\e\mathcal{G} q_\e u_0(x) \\
   &=&  \frac{1}{X(\e)} \int_{\mathbb{R}}\left(\int_\mathbb{R}G(x,y)q_\e(y)G(y,z) \,dy\right)\,q_\e(z) u_0(z)\,dz,
\end{eqnarray*}
 and
 \begin{eqnarray*}
   r_\e(x) &=& \frac{1}{X(\e)}\mathcal{G}q_\e\mathcal{G} q_\e (u_\e- u_0)(x) \\
    &=& \frac{1}{X(\e)} \int_{\mathbb{R}}\left(\int_\mathbb{R}G(x,y)q_\e(y)G(y,z)dy\right)q_\e(z)(u_\e- u_0)(z)dz.
 \end{eqnarray*}
\bigskip
Now, let us show the weak convergence of $I_\e(x)$ in $C([0,1])$.\\
\bigskip
{\bf (b) Convergence of $I_\e(x)$.}
In order to prove this claim, we start by establishing the f.d.d convergence and then we prove the tightness.
\begin{enumerate}[(i)]
\item {Convergence of finite dimensional distributions of $I_\e(x)$.} \quad  for any $x_1, x_2, \ldots, x_n\in\mathbb{R}$ and $\lambda_1, \lambda_2, ..., \lambda_n\in\mathbb{R}$ ($n\geq 1$), we have
\begin{eqnarray*}
  \sum_{k=1}^n \lambda_k \cdot I_\e(x_k)  &=& \sum_{k=1}^n \lambda_k \cdot  \frac{1}{X(\e)}\cdot \int_\mathbb{R} G(x_k, y) q^\e(y) u_0(y) \, dy  \\
   &=& \frac{1}{X(\e)}\cdot  \int_\mathbb{R}  \sum_{k=1}^n \lambda_k \cdot G(x_k, y) q^\e(y) u_0(y) \, dy.
\end{eqnarray*}
Note that the function $\sum_{k=1}^n \lambda_k G(x_k, \cdot)  u_0(\cdot)$ have at most finitely many discontinuities. Thus, Theorem \ref{atef} and Remark \ref{rem} imply that
    $\sum_{k=1}^n \lambda_k \, I_\e(x_k)$ converges in distribution to
    \begin{equation}\label{}
      \sum_{k=1}^n \lambda_k \cdot I(x_k) \,=\, \sum_{k=1}^n \lambda_k \cdot \frac{V_m}{m!}\int_\mathbb{R}  G(x_k, y)  u_0(y)\, dZ(y),
    \end{equation}
yielding the desired convergence of finite-dimensional distributions.
\item {Tightness of $I_\e(x)$.} \quad We check on Kolmogorov's criterion (\cite[Corollary 16.9]{OK02}). Fix $0\leq s\leq t\leq 1$. Then
\begin{eqnarray*}
&& \quad \E \big( \vert I_\e(s) - I_\e(t) \vert^2 \big)\\
 &=& \E \bigg[ \,\, \frac{1}{X(\e)^2} \bigg\vert \int_0^1  \big(G(s,y)-G(t,y)\big)q_\e(y)   u_0(y) \, dy  \, \bigg\vert^2 \, \bigg] \\
   &\leq& \frac{1}{X(\e)^2}  \int_0^1\int_0^1 \vert G(s,y)-G(t,y)\vert \cdot \vert G(s,\xi)-G(t,\xi)\vert \big\vert \gamma_q\Big( \frac{y-\xi}{\e} \Big) u_0(y) u_0(\xi)\big\vert\,dyd\xi\\
   &=& (\text{Lip}G)^2\vert s-t\vert^2\frac{1}{X(\e)^2} \int_0^1 \int_0^1 \big\vert \gamma_q\Big( \frac{y-\xi}{\e} \Big) u_0(y) u_0(\xi)\big\vert\,dyd\xi
\end{eqnarray*}
Now let us fix a number $\zeta\in(0,1)$, one can write (since $u_0$ is bounded)
\begin{eqnarray}
   && \quad \sup_{\e\in(0,\zeta)} \frac{1}{X(\e)^2} \int_0^1 \int_0^1 \big\vert \gamma_q\Big( \frac{\xi-y}{\e} \Big) u_0(y) u_0(\xi)\big\vert\,dyd\xi\nonumber\\
   &\leq & \text{Cst}\,\sup_{\e\in(0,\zeta)} \int_0^1\int_0^1 \big\vert \gamma_q\Big( \frac{\xi-y}{\e} \Big)\big\vert\,dyd\xi \label{aaaa}\\
   &\leq& \text{Cst}\,\sup_{\e\in(0,\zeta)} \left(\int_{D_{1,\e}}\big\vert \gamma_q\Big( \frac{\xi-y}{\e} \Big)\big\vert\,dyd\xi+ \int_{D_{2,\e}}\big\vert \gamma_q\Big( \frac{\xi-y}{\e} \Big)\big\vert\,dyd\xi\right)\nonumber
\end{eqnarray}
The integration over $D_{1,\e}$ gives
$$\int_{D_{1,\e}}\big\vert \gamma_q\Big( \frac{\xi-y}{\e} \Big)\big\vert\,dyd\xi\leq \text{Cst}.$$
On the other hand, by \eqref{correlation_particular},
$$\big\vert \gamma_q\Big( \frac{\xi-y}{\e} \Big)\big\vert \leq \text{Cst}\,L\Big(\frac{\xi-y}{\e}\Big) \Big\vert \frac{\xi-y}{\e}\Big\vert^{-2(1-H)}, \quad\forall (\xi,y)\in D_{2,\e}.$$
Thus, with $\beta>0$ small enough such that $2m\beta+2(1-H)\in(0,1)$, we have
\begin{eqnarray}
   && \quad \sup_{\e\in(0,\zeta)} \frac{1}{X(\e)^2} \int_0^1 \int_0^1 \big\vert \gamma_q\Big( \frac{\xi-y}{\e} \Big) u_0(y) u_0(\xi)\big\vert\,dyd\xi\nonumber\\
   &\leq& \text{Cst}\,\sup_{\e\in(0,\zeta)} \int_{D_{2,\e}}\left\{ \,\, \frac{ L\big( \vert (\xi-y)/\e \vert \big) }{ L(1/\e)} \, \right\} ^{2m}  \vert \xi-y\vert^{-2(1-H)}\,dyd\xi \nonumber\\
  &\leq & \text{Cst} \int_{D_{2,\e}} \vert \xi-y\vert^{-2m\beta -2(1-H)} \,dyd\xi\label{Ah}\\
  &\leq& \text{Cst},\nonumber
\end{eqnarray}
where \eqref{Ah} follows from Potter's theorem. Therefore,
\begin{align*}
\E \big( \vert I_\e(s) - I_\e(t) \vert^2 \big) \,\leq\, \text{Cst} (\text{Lip}G)^2\vert s-t\vert^2.
\end{align*}
This proves the tightness of $(I_\e(x))_\e$ by means of the usual Kolmogorov criterion.
\end{enumerate}
\bigskip
{\bf (c) Control on the remainder term $\mathcal{R}^\e(x)$ in \eqref{expression}.}  We shall prove that the process $\mathcal{R}^\e$ converges in probability to zero in $C([0,1])$. First we claim that if $G\in C([0,1])$, then there exists some constant $C = C(G)$ such that
 \begin{align} \label{claim0} \sup_{x\in[0,1]}  \E\left[\left(  \int_0^x q(\frac{y}{\e}) G(y) \, dy \right)^2\right] \leq  C\, X(\e)^2 \,\, . \end{align}
Indeed, the same argument we used for bounding \eqref{aaaa} works here as well:
\begin{align*}
 &\qquad \sup_{x\in[0,1]}  \E\left[\left( \int_0^x q(\frac{y}{\e}) G(y) \, dy \right)^2\right]  \\
 & \leq \| G \| _\infty^2  \int_{[0,1]^2} \bv \gamma_q( \vert \frac{y - z}{\e} \vert) \bv \, dy\, dz  \\
 & \leq   \| G \| _\infty^2   X(\e)^2 \left( \sup_{\e\in(0,\zeta)} \frac{1}{X(\e)^2} \int_{[0,1]^2} \bv \gamma_q( \vert \frac{y - z}{\e} \vert) \bv \, dy\, dz \right) \\
 & \leq \text{Cst} X(\e)^2 \,\, ,
\end{align*}
where the last inequality follows from \eqref{Ah}.\\
\textbf{Convergence of f.d.d. of $\mathcal{R}^\e(x)$.} This is divided into two steps \\
\textbf{Step 1:} Due to the explicit expression of $Q_\e(x)$, it follows that:
\begin{align}
&\qquad \E\big[ \vert Q_\e(x)\vert^2 \big] \notag\\
&= \mathbb{E}\left[ \Big\vert \frac{1}{X(\e)} \int_{\mathbb{R}}\left(\int_\mathbb{R}G(x,y)q_\e(y)G(y,z) \,dy\right)\,q_\e(z) u_0(z)\,dz \Big\vert^2 \right]\,\notag\\
& \leq \frac{1}{X(\e)^2} \int_{[0,1]^4} \big\vert G(x,y)G(y,z)G(x,\xi)G(\xi,\eta)\big\vert u_0(z)u_0(\eta)\mathbb{E}\Big[\big\vert q_\e(y)q_\e(z)
 q_\e(\xi)q_\e(\eta)\big\vert\Big] \,dydzd\xi d\eta \, \notag\\
  &\leq \text{Cst}\, X(\e)^{-2}\|G\|_\infty^4\|u_0\|^2_\infty \int_{[0,1]^4} \mathbb{E}\Big[\big\vert q_\e(y)q_\e(z) q_\e(\xi)q_\e(\eta)\big\vert\Big] \,dydzd\xi d\eta \,\, . \label{es}
 \end{align}
I order to estimate the fourth-order moments of $q(x,\omega)$  in \eqref{es}, we use \cite[Proposition 4.1]{Ahmad}. This Proposition states that for fixed set $F=\{1,2,3,4\}$, the collection of two pairs in $F$ can be defined by
 $$\mathrm{T}= \{p=\{(p(1),p(2)), (p(3),p(4))\}\,\, \text{such that}$$$$ p(i)\in F, p(1)\neq p(2), p(3)\neq p(4)\}.$$ Let $\mathrm{T}_\ast\subset \mathrm{T}$ be such that all $p(i)$ are different. Assume that $q=\{q(x)\}_{x\in\mathbb{R}_+}$ is constructed as in \eqref{q} so that $\{g(x\}_{x\in\mathbb{R}_+}$ is the Gaussian process given by \eqref{g} and the function $\Phi$ belongs to $\mathscr{G}_m$ and satisfies \eqref{AA1} and \eqref{3}. Then we have
   \begin{align}
 &\,\,\,\,\left\vert \mathbb{E}\prod_{i=1}^4 q(x_i)-\sum_{p\in \mathrm{T}_\ast} \gamma_q(x_{p(1)}-x_{p(2)})\gamma_q(x_{p(3)}-x_{p(4)})\right\vert \notag\\
 &\leq \text{Cst}\sum_{p\in\mathrm{T}\backslash\mathrm{T}_\ast}  \gamma_q(x_{p(1)}-x_{p(2)})\gamma_q(x_{p(3)}-x_{p(4)}),
\end{align}
where Cst is the one in \eqref{3}.\\
We return to $Q_\e(x)$ and by applying the above estimate, we get
\begin{equation}
  \E\big[ \vert Q_\e(x)\vert^2 \big] \leq  \text{Cst}\, X(\e)^{-2}\|G\|_\infty^4\|u_0\|^2_\infty \int_{[0,1]^4} \sum_p \Big\vert \gamma_q(\frac{x_{p(1)}-x_{p(2)}}{\e}) \gamma_q(\frac{x_{p(3)}-x_{p(4)}}{\e})\Big\vert\,dydzd\xi d\eta.
\end{equation}
Due to \eqref{correlation_particular}, $\gamma_q(\frac{x}{\e})$ is bounded by  $\text{Cst}\, L(|\frac{x}{\e}|)^{2m} \e^{2m(1-H_0)} |x|^{-2m(1-H_0)}$. Then each item in the sum has a contribution of size $\e^{2\times 2m(1-H_0)}$. Finally, due to the expression of $X(\e)$, we conclude that $ \E\big[ \vert Q_\e(x)\vert^2 \big] \leq \text{Cst} \e^{2m(1-H_0)}$, and this point proves the convergence of f.d.d. of $Q_\e(x)$ to the zero function.\\
 \textbf{Step 2:}\\
  Now, we will study the convergence of $r_\e(x)$. Due to the explicit expression of $r_\e(x)$, it follows from Cauchy-Schwarz inequality that
  {\footnotesize
  \begin{eqnarray}
    \mathbb{E} \vert r_\e(x) \vert &\leq& X(\e)^{-1}\mathbb{E}\Bigg[ \Big(\int_{[0,1]} \vert q_\e(z)(u_\e-u_0)(z) \vert^2 \,dz\Big)^{\frac{1}{2}}\Big(\int_{[0,1]}\Big(\int_{[0,1]}\vert G(x,y) q_\e(y) G(y,z) \vert\,dy\Big)^2 \,\,dz\Big)^{\frac{1}{2}}\Bigg] \nonumber \\
     &\leq & X(\e)^{-1} \|q\|_{\infty}  (\mathbb{E}\|u_\e-u_0\|^2)^{\frac{1}{2}}\Big(\mathbb{E} \int_{[0,1]^3}\Big\vert G(x,y)G(y,z)G(x,\xi)G(\xi,z)q_\e(y)q_\e(\xi) \Big\vert \,dyd\xi dz \Big)^{\frac{1}{2}}\nonumber\\
     &\leq& X(\e)^{-1} \|q\|_{\infty} \|G\|_\infty^4 (\mathbb{E}\|u_\e-u_0\|^2)^{\frac{1}{2}}\left(\int_{[0,1]^2} \vert \gamma_q\big(\frac{\xi-y}{\e}\big) \,d\xi dy\right)^{\frac{1}{2}}.\nonumber
  \end{eqnarray}} By the homogenization theorem \ref{homogenization}, the expectation of $\|u_\e-u_0\|^2$ is of size $\e^{2m(1-H_0)}$. The integral above can be bounded by $\text{Cst}\,X(\e)^2$ (see \eqref{claim0}). Finally, we have
 \begin{equation} \mathbb{E} \vert r_\e(x) \vert \leq \text{Cst}\,\, \e^{2m(1-H_0)}.\end{equation}
Combining the results of the two steps, the process $\mathcal{R}^\e$ converges in f.d.d. to the zero function.\\
\textbf{Tightness of $\mathcal{R}^\e(x)$.}\\
Fix $0\leq u < v\leq 1$. Then
{\footnotesize
\begin{eqnarray*}
 && \big\|  \mathcal{R}^\e(u) - \mathcal{R}^\e(v) \big\| ^2 \\
 &=& \mathbb{E} \Big[\frac{1}{X(\e)} \Big\vert \int_{[0,1]}\left(\int_{[0,1]} \big[ G(u,z)-G(v,z)\big] q_\e(z)G(z,\xi)\,  dz \right)q_\e(\xi) u_\e(\xi) \,d\xi \Big\vert\Big]^2 \\
   &\leq & \frac{1}{X(\e)^2}  \mathbb{E}\Big[ \|q\|_\infty^2 \|u_\e\|^2 \Big\vert\int_{[0,1]} \Big(\int_{[0,1]} \big[G(u,z)-G(v,z)\big]q_\e(z)G(z,\xi) \,d\xi\Big)^2\,dz\Big\vert\Big]\\
   &\leq & \frac{1}{X(\e)^2} \|q\|_\infty^2 \mathbb{E}\Big[ \|u_\e\|^2\Big\vert\int_{[0,1]^3}
\big[G(u,z)-G(v,z)\big]\big[G(u,\eta)-G(v,\eta)\big]q_\e(z)q_\e(\eta)G(z,\xi)G(\eta,\xi) \,dz d\eta d\xi\Big\vert\Big].
\end{eqnarray*}}
The fact that the operator norm of $\mathcal{G}_\e$ is bounded implies that $\|u_\e\|$ can be bounded uniformly with respect to $\omega$. We use the Lipschitz continuity and the uniform bound of $G$ to get
\begin{align*}
&\qquad \big\|  \mathcal{R}^\e(u) - \mathcal{R}^\e(v) \big\| ^2\\
& \leq \frac{1}{X(\e)^2} \|q\|_\infty^2 \text{Lip}(G)^2\|G\|^2_\infty |u-v|^2\int_{[0,1]^3} \Big\vert\gamma_q\big(\frac{z-\eta}{\e}\big)\Big\vert \,dz d\eta d\xi\\
& \leq \text{Cst}\, |u-v|^2,
\end{align*}
where the last inequality follows from the same arguments used for \eqref{claim0}. This completes the proof of the tightness of $\mathcal{R}^\e(x)$.\\
Combining the results above, the process $\mathcal{R}^\e$ converges in probability to the zero function.
\bigskip

{\bf (d) Conclusion}.
Combining the results of (a), (b) and (c),
the proof of Theorem \ref{Non-central-limit-theorem} is  concluded by evoking Slutsky's lemma.

\end{proof}

\bibliographystyle{plain}

\begin{thebibliography}{9}
\bibitem{B08} G. Bal, Central limits and homogenization in random media, Multiscale Model. Simul. 7 (2008), no. 2, 677-702.
\bibitem{BG12} G. Bal, J. Garnier, Y. Gu and W. Jing, Corrector theory for elliptic equations with long-range correlated random potential,
Asymptot. Anal. 77 (2012), no. 3-4, 123-145.
\bibitem{kk} G. Bal and W. Jing, Homogenization and corrector theory for linear transport in random media, Discrete Contin. Dyn. Syst.
28 (2010), 1311-1343.
\bibitem{Ahmad} G. Bal and W. Jing, Corrector theory for elliptic equations in random media with singular Green's function. Application to
random boundaries, Commun. Math. Sci. 19 (2011), 83-411.
\bibitem{BGT87} N. H. Bingham, C. M. Goldie and J. L. Teugels, Regular Variation, Encyclopedia Math. Appl. 27, Cambridge University
Press, Cambridge, 1987.
\bibitem{Figari} R. Figari, E. Orlandi and G. Papanicolaou, Mean field and Gaussian approximation for partial differential equations with
random coefficients, SIAM J. Appl. Math. 42 (1982), no. 5, 1069-1077.
\bibitem{GB12} Y. Gu and G. Bal, Random homogenization and convergence to integrals with respect to the Rosenblatt process,
J. Differential Equations 253 (2012), no. 4, 1069-1087.
\bibitem{JKO94} V. V. Jikov, S. M. Kozlov and O. A. Oleinik, Homogenization of Differential Operators and Integral Functionals, Springer,
New York, 1994.
\bibitem{OK02} O. Kallenberg, Foundations of Modern Probability, 2nd ed., Probab. Appl. (N. Y.), Springer, New York, 2002.
\bibitem{Lechiheb} A. Lechiheb, I. Nourdin, G. Zheng and E. Haouala, Convergence of random oscillatory integrals in the presence of longrange
dependence and application to homogenization, preprint (2017), https://arxiv.org/abs/1607.01166v2.
\bibitem{MT07} M. Maejima and C. A. Tudor, Wiener integrals with respect to the Hermite process and a non-central limit theorem, Stoch.
Anal. Appl. 25 (2007), 1043-1056.
\bibitem{PT00} V. Pipiras and M. S. Taqqu, Integration questions related to fractional Brownian motion, Probab. Theory Related Fields 118
(2000), 251-291.
\bibitem{Taqqu79} M. S. Taqqu, Convergence of integrated processes of arbitray Hermite rank, Z. Wahrscheinlichkeitstheorie Verw. Gebiete
50 (1979), no. 1, 53-83.
Authenticated



\end{thebibliography}

\end{document}